\documentclass{article} 
\usepackage{proceed2e}

\usepackage{amssymb,amsfonts}
\usepackage{amsthm} 
\usepackage[cmex10]{amsmath}
\usepackage[usenames,dvipsnames,svgnames]{xcolor} 
\usepackage{graphicx}
\usepackage{url}
\usepackage{times}
\usepackage{array} 
\usepackage[sort]{natbib} 

\usepackage{algorithm,algpseudocode} 

\usepackage[utf8]{inputenc}
\usepackage[T1]{fontenc}
\usepackage{lmodern} 

\usepackage{hyperref}
\hypersetup{pdfauthor={Stephen Becker, Volkan Cevher, 
    Sasha Aravkin, Peder Olsen},
    pdftitle={A variational approach to stable principal component pursuit},
    pdfkeywords={rpca, robust PCA, SPCP, stable principal component pursuit, source separation, quasi-Newton},
    colorlinks=true,
    citecolor=Mulberry,
    urlcolor=Bittersweet,
}
\pagestyle{plain}
\newtheorem{theorem}{Theorem}[section]
\newtheorem{lemma}[theorem]{Lemma}
\newtheorem{corollary}[theorem]{Corollary}
\newtheorem{proposition}[theorem]{Proposition}

\newtheorem{remark}[subsection]{Remark}

\numberwithin{equation}{section}

\setlength{\marginparwidth}{.8in}
\let\oldmarginpar\marginpar
\renewcommand\marginpar[1]{\-\oldmarginpar[\raggedleft\footnotesize #1]%
{\raggedright\footnotesize #1}}



\setlength{\marginparpush}{1em}  
\setlength{\marginparsep}{1ex} 



\usepackage[normalem]{ulem} 




 


\DeclareMathOperator{\diag}{diag}


\DeclareMathOperator*{\minimize}{minimize}


\newcommand{\eps}{\varepsilon}



\newcommand{\R}{\mathbb{R}}

\newcommand{\AAA}{\ensuremath{\mathcal{A}}}   
\newcommand{\proj}{\ensuremath{\mathcal{P}}}  
\newcommand{\Proj}{\proj}

\newcommand{\order}{\mathcal{O}}              



\DeclareMathAlphabet{\mathbbb}{U}{bbold}{m}{n}



\newcommand{\iprod}[2]{\left\langle #1,\,#2 \right\rangle}

\newcommand{\norm}[1]{{\left\lVert{#1}\right\rVert}}




\linespread{1}
\newcommand{\gauge}[2]{\gamma\left(#1 \mid #2 \right)}

\newcommand{\pmat}[1]{\begin{pmatrix}#1\end{pmatrix}}
\newcommand{\vertiii}[1]{{\left\vert\kern-0.25ex\left\vert\kern-0.25ex\left\vert #1
    \right\vert\kern-0.25ex\right\vert\kern-0.25ex\right\vert}}

\newcommand{\matrixNorm}[1]{\vertiii{#1}} 

\newcommand{\lambdaSum}{\lambda_\text{sum}}
\newcommand{\lambdaMax}{\lambda_\text{max}}
\newcommand{\lambdaL}{\lambda_\text{L}}
\newcommand{\lambdaS}{\lambda_\text{S}}
\newcommand{\tauSum}{\tau_\text{sum}}
\newcommand{\tauMax}{\tau_\text{max}}

\newcommand{\phiSum}{\phi_{\text{sum}}}
\newcommand{\phiMax}{\phi_{\text{max}}}

\newcommand{\Loracle}{L_\text{oracle}}
\newcommand{\Soracle}{S_\text{oracle}}


\title{A  variational approach to stable principal component pursuit
}
\author{ {\bf Aleksandr Aravkin} \\
T. J. Watson Center \\
IBM Research \\
Yorktown Heights, NY \\
\And
{\bf Stephen Becker} \\
T. J. Watson Center \\
IBM Research \\
Yorktown Heights, NY \\
\And
{\bf Volkan Cevher} \thanks{\ Author's work is supported in part by the European Commission under the grants MIRG-268398 and ERC Future Proof, and by the Swiss Science Foundation under the grants SNF 200021-132548, SNF 200021-146750 and SNF CRSII2-147633.} \\
LIONS \\
EPFL \\
Lausanne, Switzerland \\
\And
{\bf Peder Olsen} \\
T. J. Watson Center \\
IBM Research \\
Yorktown Heights, NY \\
}
\date{\today}
\begin{document}
\maketitle

\begin{abstract}
We introduce a new convex formulation for stable principal component
pursuit (SPCP) to decompose noisy signals into low-rank and sparse
representations. For numerical solutions of our SPCP formulation, we
first develop a convex variational framework and then accelerate it
with quasi-Newton methods.  We show, via synthetic and real data
experiments, that our approach offers advantages over the classical
SPCP formulations in scalability and practical parameter selection.
\end{abstract}

\section{INTRODUCTION}
Linear superposition is  a useful model for many applications, including
nonlinear mixing problems. Surprisingly, we can perfectly distinguish multiple
elements in a given signal using convex optimization as long as they are
concise and look sufficiently different from one another. Popular examples
include robust principal component analysis (RPCA) where we decompose a signal
into low rank and sparse components and {\it stable principal component pursuit
(SPCP)}, where we also seek an explicit noise component within the RPCA
decomposition. Applications include alignment of occluded
images~\citep{PenGanWri:12}, scene triangulation~\citep{ZhaLiaGan:11},  model
selection~\citep{CPW12}, face recognition, and document
indexing~\citep{CanLiMa:11}.

The SPCP formulation  can be mathematically stated as follows.  Given a noisy
matrix $Y \in \R^{m \times n}$, we decompose it as a sum of a low-rank matrix
$L$ and a sparse matrix $S$ via the following convex program
\begin{equation} \label{eq:sum-SPCP} \tag{$\text{SPCP}_\text{sum}$}
\begin{aligned}
   & \minimize_{L,S}\; \matrixNorm{L}_* + \lambdaSum \|S\|_1 \\
   & \text{subject to}\; \|L+S-Y\|_F \le \eps, 
\end{aligned}
\end{equation}
where the 1-norm $\|\cdot\|_1$ and nuclear norm $\matrixNorm{\cdot}_*$ are
given by $\|S\|_1 = \sum_{i,j} |s_{i,j}|, \matrixNorm{L}_* = \sum_{i}
\sigma_i(L),$ where $\sigma(L)$ is the vector of singular values of $L$. In
\eqref{eq:sum-SPCP}, the parameter $\lambdaSum > 0$ controls the relative
importance of the low-rank term $L$ vs.\ the sparse term $S$, and the parameter
$\eps$ accounts for the unknown perturbations $Y-(L+S)$ in the data not
explained by $L$ and $S$. 

When $\eps=0$,~\eqref{eq:sum-SPCP} is the ``robust PCA'' problem as analyzed
by~\cite{ChaSanPar:09,CanLiMa:11}, and it has perfect recovery guarantees under
stylized incoherence assumptions. There is even theoretical guidance for
selecting a minimax optimal regularization parameter $\lambdaSum$
\citep{CanLiMa:11}. Unfortunately, many practical problems only approximately
satisfy the idealized assumptions, and hence, we typically tune RPCA via
cross-validation techniques. SPCP further complicates the practical tuning due
to the additional parameter $\eps$. 

To cope with practical tuning issues  of SPCP, we propose the following new variant called ``max-SPCP'':
\begin{equation} \label{eq:max-SPCP} \tag{$\text{SPCP}_\text{max}$}
\begin{aligned}
   & \minimize_{L,S}\; \max\left( \matrixNorm{L}_* , \lambdaMax \|S\|_1 \right) \\
   & \text{subject to}\; \|L+S-Y\|_F \le \eps,
\end{aligned}
\end{equation}
where $\lambdaMax>0$ acts similar to $\lambdaSum$. Our work shows that this new
formulation  offers both modeling and computational advantages over
\eqref{eq:sum-SPCP}. 

Cross-validation with \eqref{eq:max-SPCP} to estimate $(\lambdaMax,\eps)$ is
significantly easier than estimating $(\lambdaSum,\eps)$ in
\eqref{eq:sum-SPCP}. For example, given an \emph{oracle} that provides an ideal
separation $Y \simeq \Loracle + \Soracle$, we can use $\eps = \|\Loracle  +
\Soracle - Y \|_F$ in both cases. However, while we can estimate $\lambdaMax =
\|\Loracle\|_*/\|\Soracle\|_1$, it is not clear how to choose $\lambdaSum$ from
data. Such cross validation can be performed on a similar dataset, or it could
be obtained from a probabilistic model.

Our convex approach for solving \eqref{eq:sum-SPCP} generalizes to other source
separation problems \citep{McCoySPMAG} beyond SPCP. Both  \eqref{eq:max-SPCP}
and \eqref{eq:sum-SPCP} are challenging to solve when the dimensions are large.
We show in this paper that these problems can be solved more efficiently by
solving a few (typically 5 to 10) subproblems of a different functional form.
While the efficiency of the solution algorithms for \eqref{eq:sum-SPCP} relies
heavily on the efficiency of the 1-norm and nuclear norm projections, the
efficiency of our solution algorithm \eqref{eq:max-SPCP} is preserved for
arbitrary norms. Moreover,  \eqref{eq:max-SPCP} allows a faster algorithm in
the standard case, discussed in Section~\ref{sec:QN}.

\section{A PRIMER ON SPCP}
The theoretical and algorithmic research on SPCP formulations (and source
separation in general)  is rapidly evolving. Hence, it is important to set the
stage first in terms of the available formulations to highlight our
contributions. 

To this end, we  illustrate \eqref{eq:sum-SPCP} and \eqref{eq:max-SPCP} via
different convex formulations. Flipping the objective and the constraints
in~\eqref{eq:max-SPCP} and~\eqref{eq:sum-SPCP}, we obtain the following convex
programs
\begin{equation} \label{eq:sum-SPCP-flip}  \tag{$\text{flip-SPCP}_\text{sum}$}
\begin{aligned}
   & \minimize_{L,S}\; \frac{1}{2}\|L+S-Y\|_F^2  \\
   & \text{s.t.}\quad \matrixNorm{L}_* + \lambdaSum\|S\|_1 \le \tauSum
\end{aligned}
\end{equation}

\begin{equation} \label{eq:max-SPCP-flip}  \tag{$\text{flip-SPCP}_\text{max}$}
\begin{aligned}
   & \minimize_{L,S}\; \frac{1}{2}\|L+S-Y\|_F^2  \\
   & \text{s.t.}\quad \max(\matrixNorm{L}_*, \lambdaMax\|S\|_1) \le \tauMax
\end{aligned}
\end{equation}

\begin{remark}
\label{rem:flip}
The solutions of~\eqref{eq:sum-SPCP-flip} and~\eqref{eq:max-SPCP-flip} are
 related to the solutions of~\eqref{eq:sum-SPCP}
and~\eqref{eq:max-SPCP} via the Pareto frontier by~\citet[Theorem
2.1]{AravkinBurkeFriedlander:2013}.
If the constraint $\|L + S - Y\| \leq \eps$ is tight at the solution, 
 then there exist corresponding parameters $\tauSum(\eps)$ and
$\tauMax(\eps)$, for which the optimal value of~\eqref{eq:sum-SPCP-flip}
and~\eqref{eq:max-SPCP-flip} is $\eps$, and the corresponding optimal solutions
$(\overline S_s, \overline L_s)$ and $(\overline S_m, \overline L_m)$ are also
optimal for~\eqref{eq:sum-SPCP}  and~\eqref{eq:max-SPCP}. 
\end{remark}

For completeness, we also include the Lagrangian formulation, which is covered by our new algorithm:
\begin{equation} \label{eq:lag-SPCP} \tag{Lag-SPCP}
   \minimize_{L,S}\; \lambdaL \matrixNorm{L}_* + \lambdaS \|S\|_1 + \frac{1}{2}\|L+S-Y\|_F^2 
\end{equation}

Problems \eqref{eq:max-SPCP-flip} and \eqref{eq:sum-SPCP-flip} can be solved using 
projected gradient and accelerated gradient methods. 
The disadvantage of some of these formulations is that it may not be clear how to tune the parameters.  
Surprisingly, an algorithm we
propose in this paper can solve~\eqref{eq:max-SPCP} and~\eqref{eq:sum-SPCP}
using a sequence of flipped problems that specifically exploits the structured
relationship cited in Remark~\ref{rem:flip}. In practice, we will see that
better tuning also leads to faster algorithms, e.g., fixing $\eps$ ahead of
time to an estimated `noise floor' greatly reduces the amount of required
computation if parameters are to be selected via cross-validation.

Finally, we note that in some cases, it is useful to change the $\|L+S-Y\|_F$
term to $\|\AAA(L+S-Y)\|_F$ where $\AAA$ is a linear operator. For example, let
$\Omega$ be a subset of the indices of a $m \times n$ matrix. We may only
observe $Y$ restricted to these entries, denoted $\Proj_\Omega(Y)$, in which
case we choose $\AAA=\Proj_\Omega$. Most existing RPCA/SPCP algorithms adapt to
the case $\AAA=\Proj_\Omega$ but this is due to the strong properties of the
projection operator $\Proj_\Omega$. The advantage of our approach is that it
seamlessly handles arbitrary linear operators $\AAA$. In fact, it also
generalizes to smooth misfit penalties, that are more robust than the Frobenius
norm, including the Huber loss. Our results also generalize to some other
penalties on $S$ besides the 1-norm.

The paper proceeds as follows. In Section~\ref{sec:literature}, we describe
previous work and algorithms for SPCP and RPCA.  In
Section~\ref{sec:variational}, we cast the relationships between pairs of
problems~\eqref{eq:sum-SPCP-flip}, \eqref{eq:sum-SPCP} and
\eqref{eq:max-SPCP-flip},~\eqref{eq:max-SPCP} into a general variational
framework, and highlight the product-space regularization structure  that
enables us solve the formulations of interest using corresponding flipped
problems.  We discuss computationally efficient projections as optimization
workhorses in Section~\ref{sec:projections}, and develop new accelerated
projected quasi-Newton methods for the flipped and Lagrangian formulations in
Section~\ref{sec:QN}. Finally, we demonstrate the efficacy of the new solvers
and the overall formulation on synthetic problems and a real cloud removal
example in Section~\ref{sec:numerics}, and follow with conclusions in
Section~\ref{sec:conclusions}.

\section{PRIOR ART}
\label{sec:literature}
While problem \eqref{eq:sum-SPCP} with $\eps=0$ has several solvers (e.g., it
can be solved by applying the widely known Alternating Directions Method of
Multipliers (ADMM)/Douglas-Rachford method~\citep{Combettes2007}), the
formulation assumes the data are noise free. Unfortunately, the presence of
noise we consider in this paper introduces a third term in the ADMM framework,
where the algorithm is shown to be non-convergent \citep{chen2013direct}.
Interestingly,  there are only a handful of methods that can handle this case.
Those using smoothing techniques no longer promote exactly sparse and/or
exactly low-rank solutions~\citep{AybatRPCA}. 
Those using dual decomposition techniques require
high iteration counts.  Because each step requires a partial singular value
decomposition (SVD) of a large matrix, it is critical that the methods only
take a few iterations.

As a rough comparison, we start with related solvers that solve
\eqref{eq:sum-SPCP} for  $\eps=0$.  \cite{RPCA_algo_Wright} solves an instance
of \eqref{eq:sum-SPCP} with $\eps=0$ and a $800 \times 800$ system in $8$
hours. By switching to the \eqref{eq:lag-SPCP} formulation, \cite{GaneshRPCA}
uses the accelerated proximal gradient method~\citep{BecTeb09} to solve a $1000
\times 1000$ matrix in under one hour. This is improved further in
\cite{lin2010augmented} which again solves \eqref{eq:sum-SPCP} with $\eps=0$
using the augmented Lagrangian and ADMM methods and solves a $1500\times 1500$
system in about a minute. As a prelude to our results, our method can solve
some systems of this size in about $10$ seconds (c.f.,~Fig.~\ref{fig:1}).

In the case of \eqref{eq:sum-SPCP} with $\eps > 0$, \cite{ASALM} propose the
alternating splitting augmented Lagrangian method (ASALM), which exploits
separability of the objective in the splitting scheme, and can solve a $1500
\times 1500$ system in about five minutes.

The partially smooth proximal gradient (PSPG) approach of \cite{AybatRPCA}
smooths just the nuclear norm term and then applies the well-known FISTA
algorithm~\citep{BecTeb09}.  \cite{AybatRPCA} show that the proximity step can
be solved efficiently in closed-form, and the dominant cost at every iteration
is that of the partial SVD.  They include some examples on video,  
lopsided matrices: $25000 \times 300$ or so, in about 1 minute).  solving $1500
\times 1500$ formulations in under half a minute.

The nonsmooth adaptive Lagrangian (NSA) algorithm of~\cite{Aybat2013} is a
variant of the ADMM for \eqref{eq:sum-SPCP}, and makes use of the insight
of~\cite{AybatRPCA}.  The ADMM variant is interesting in that it splits the
variable $L$, rather than the sum $L+S$ or residual $L+S-Y$.  Their experiments
solve a 1500 $\times$ 1500 synthetic problems in between 16 and 50 seconds
(depending on accuracy) .

\citet{Shen2014} develop a method exploiting low-rank matrix factorization
scheme, maintaining $L = UV^T$.  This technique has also been effectively used
in practice for matrix completion~\citep{JasonLee,recht2011parallel,Aravkin2013}, but lacks a full
convergence theory in either context.  The method of~\citep{Shen2014} was an
order of magnitude faster than ASALM, but encountered difficulties in some
experiments where the sparse component dominated the low rank component in some
sense.  \cite{Mansour2014} attack the~\ref{eq:sum-SPCP} formulation using a factorized approach, 
together with alternating solves between $(U, V)$ and $S$. Non-convex techniques also include 
hard thresholding approaches, e.g. the approach of~\cite{kyrillidis2014matrix}.
While the factorization technique may potentially speed up some
of the methods presented here, we leave this to future work, and only work
with convex formulations. 

\section{VARIATIONAL FRAMEWORK}
\label{sec:variational}

Both of the formulations of interest 
\eqref{eq:sum-SPCP} and~\eqref{eq:max-SPCP} can be written as follows:
\begin{equation}
\label{eq:general}
\min \phi(L, S) \quad \text{s.t.} \quad \rho\left(L + S - Y\right) \leq \eps. 
\end{equation}
Classic formulations assume $\rho$ to be the Frobenius norm; however, this
restriction is not necessary, and we consider $\rho$ to be smooth and convex.
In particular, $\rho$ can be taken to be the robust Huber penalty~\citep{Hub}.
Even more importantly, this formulation allows pre-composition of a smooth
convex penalty with an arbitrary linear operator $\mathcal A$, which extends
the proposed approach to a much more general class of problems.  Note that a
simple operator is already embedded in both formulations of interest:  
\begin{equation}
\label{eq:ids}
L + S = \begin{bmatrix} I & I \end{bmatrix} \begin{bmatrix} L \\ S \end{bmatrix}.
\end{equation}
Projection onto a set of observed indices $\Omega$ is also a simple linear
operator that can be included in $\rho$.  Operators may include different
transforms (e.g., Fourier) applied to either $L$ or $S$. 

The main formulations of interest differ only in the functional $\phi(L,S)$.  For~\eqref{eq:sum-SPCP}, we have 
\[
\phiSum(L, S) = \matrixNorm{L}_* + \lambdaSum \|S\|_1,
\] 
while for~\eqref{eq:max-SPCP}, 
\[
\phiMax(L, S) = \max(\matrixNorm{L}_*, \lambdaMax \|S\|_1).
\]
The problem class~\eqref{eq:general} falls into the class of problems studied
by~\cite{vandenberg2008probing,BergFriedlander:2011} for $\rho(\cdot) =
\|\cdot\|^2$ and by~\cite{AravkinBurkeFriedlander:2013} for arbitrary convex
$\rho$.  Making use of this framework, we can define a value function
\begin{equation}
\label{eq:value}
v(\tau) = \min_{L, S} \rho\left(\mathcal A (L, S) - Y\right) \quad
\text{s.t. } \phi(L,S) \leq \tau,
\end{equation}
and use Newton's method to find a solution to $v(\tau) = \eps$.  The approach
is agnostic to the linear operator $\mathcal A$ (it can be of the simple
form~\eqref{eq:ids}; include restriction in the missing data case, etc.).

For both formulations of interest, $\phi$ is a norm defined on a product space
$\mathbb{R}^{n\times m} \times \mathbb{R}^{n\times m}$, since we can write 
\begin{eqnarray}
\phiSum(L, S) &= \left\|\begin{matrix} \matrixNorm{L}_* \\ \lambdaSum \|S\|_1 \end{matrix}\right\|_1, \\
\phiMax(L,S) &=  \left\|\begin{matrix} \matrixNorm{L}_* \\ \lambdaMax \|S\|_1 \end{matrix}\right\|_\infty.
\end{eqnarray}
In particular, both $\phiSum(L,S)$ and $\phiMax(L,S)$ are {\it gauges}.  For a
convex set $C$ containing the origin, the gauge $\gauge{x}{C}$ is defined by
\begin{equation}
\label{eq:gauge}
\gauge{x}{C} = \inf_\lambda \{\lambda: x \in \lambda C\}.
\end{equation}
For any norm $\|\cdot\|$, the set defining it as a gauge is simply the unit
ball $\mathbb{B}_{\|\cdot\|} = \{x: \|x\|\leq 1\}$.  We introduce gauges for
two reasons. First, they are more general (a gauge is a norm only if $C$ is
bounded with nonempty interior and symmetric about the origin). For example,
gauges trivially allow inclusion of non-negativity constraints.  Second,
definition~\eqref{eq:gauge} and the explicit set $C$ simplify the exposition of
the following results. 

In order to implement Newton's method for~\eqref{eq:value}, the optimization
problem to evaluate $v(\tau)$ must be solved (fully or approximately) to obtain
$(\overline L, \overline S)$.  Then the $\tau$ parameter for the
next~\eqref{eq:value} problem is updated via 
\begin{equation}
\label{eq:newton}
\tau^{k+1} = \tau^k - \frac{v(\tau) - \tau}{v'(\tau)}. 
\end{equation}
Given $(\overline L, \overline S)$, $v'(\tau)$ can be written in closed form using
\cite[Theorem 5.2]{AravkinBurkeFriedlander:2013}, which simplifies to 
\begin{equation}
\label{eq:vprime}
v'(\tau) = -\phi^\circ(\mathcal{A}^T\nabla \rho(\mathcal A (\overline L, \overline S) - Y)), 
\end{equation}
with $\phi^\circ$ denoting the polar gauge to $\phi$. 
The polar gauge is precisely $\gauge{x}{C^\circ}$, with
\begin{equation}
\label{eq:polar}
C^\circ = \{v: \left \langle v, x\right\rangle \leq 1 \quad \forall x \in C\}.
\end{equation}

In the simplest case, where $\mathcal A$ is given by~\eqref{eq:ids}, and $\rho$
is the least squares penalty, the formula~\eqref{eq:vprime} becomes 
\[
v'(\tau) = -\phi^\circ\left(\begin{bmatrix} \overline L + \overline S - Y \\ \overline L + \overline S - Y  \end{bmatrix}\right). 
\]

The main computational challenge in the approach outlined
in~\eqref{eq:value}-\eqref{eq:vprime} is to design a fast solver to evaluate
$v(\tau)$. Section~\ref{sec:QN} does just this. 

%

The key to RPCA is that the regularization functional $\phi$ is a gauge over
the product space used to decompose $Y$ into summands $L$ and $S$.  This makes
it straightforward to compute polar results for both $\phiSum$ and $\phiMax$.

\begin{theorem}[Max-Sum Duality for Gauges on Product Spaces]
\label{thm:product-gauge}
Let $\gamma_1$ and $\gamma_2$ be gauges on $\mathbb{R}^{n_1}$ and $\mathbb{R}^{n_2}$, and consider the function 
\[
g(x,y) = \max\{\gamma_1(x), \gamma_2(y)\}. 
\]
Then $g$ is a gauge, and its polar is given by 
\[
g^\circ(z_1,z_2) = \gamma_1^\circ(z_1) + \gamma_2^\circ(z_2). 
\]
\end{theorem}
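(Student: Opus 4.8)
The plan is to realize $g$ as the gauge of a Cartesian product set, which immediately shows $g$ is a gauge, and then to read off the polar from the separable structure of that product. Write $C_i = \{x_i \in \mathbb{R}^{n_i} : \gamma_i(x_i) \le 1\}$ for the unit ball of $\gamma_i$; this set is closed, convex, and contains the origin, and $\gamma_i = \gauge{\cdot}{C_i}$. First I would observe that for $\lambda > 0$ we have $g(x,y) \le \lambda$ exactly when $\gamma_1(x)\le\lambda$ and $\gamma_2(y)\le\lambda$, i.e.\ when $(x,y)\in\lambda C_1\times\lambda C_2 = \lambda(C_1\times C_2)$; hence $g = \gauge{\cdot}{C_1\times C_2}$. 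Since $C_1\times C_2$ is convex and contains the origin, this exhibits $g$ as a gauge (alternatively, $g$ is nonnegative, positively homogeneous, and convex as a pointwise maximum of two such functions, and it vanishes at the origin).

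For the polar I would use the standard identity that, for a gauge $\gamma = \gauge{\cdot}{C}$ with $C$ closed, convex, and containing the origin, the polar gauge is the support function of $C$:
\[
\gamma^\circ(z) = \gauge{z}{C^\circ} = \sup\{\langle z, x\rangle : x \in C\} = \sup\{\langle z,x\rangle : \gamma(x)\le 1\},
\]
which follows directly from Definitions~\eqref{eq:gauge} and~\eqref{eq:polar} together with positive homogeneity. Applying this to $g = \gauge{\cdot}{C_1\times C_2}$ gives
\[
g^\circ(z_1,z_2) = \sup\{\langle z_1,x\rangle + \langle z_2,y\rangle : \gamma_1(x)\le1,\ \gamma_2(y)\le1\},
\]
and since the feasible region is the product $C_1\times C_2$, the supremum splits across the two blocks, yielding $g^\circ(z_1,z_2) = \sup_{\gamma_1(x)\le1}\langle z_1,x\rangle + \sup_{\gamma_2(y)\le1}\langle z_2,y\rangle = \gamma_1^\circ(z_1) + \gamma_2^\circ(z_2)$, which is the claim.

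As an alternative route I could compute the polar set directly: $(z_1,z_2) \in (C_1\times C_2)^\circ$ iff $\langle z_1,x\rangle + \langle z_2,y\rangle \le 1$ for all $x\in C_1$ and $y\in C_2$, and optimizing over $x$ and $y$ independently shows this is equivalent to $\gamma_1^\circ(z_1) + \gamma_2^\circ(z_2)\le 1$; thus $(C_1\times C_2)^\circ$ is precisely the unit ball of the gauge $(z_1,z_2)\mapsto \gamma_1^\circ(z_1) + \gamma_2^\circ(z_2)$, and taking the gauge of this set recovers the formula. I do not expect a serious obstacle: the entire content is the separation of a supremum over a product, which is immediate. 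The only points needing mild care are (i) taking $C_i$ to be the closed unit ball $\{\gamma_i\le 1\}$ rather than an arbitrary generating set, so that the ``polar gauge equals support function'' identity applies cleanly, and (ii) the convention that the supremum is $+\infty$ when $\gamma_i^\circ$ is, so the identity stays valid for unbounded $C_i$.
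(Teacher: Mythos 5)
Your proof is correct and follows essentially the same route as the paper's: both identify $g$ as the gauge of the product set $C_1\times C_2$, invoke the fact that the polar of the gauge of a convex set containing the origin is its support function (the paper cites Rockafellar, Corollary 15.1.2), and then split the supremum over the product. Your added care about taking $C_i$ to be the closed unit ball and handling unbounded sets is a reasonable refinement but does not change the argument.
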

\begin{proof}
Let $C_1$ and $C_2$ denote the canonical sets corresponding to gauges $\gamma_1$ and $\gamma_2$. It immediately follows 
that $g$ is a gauge for the set $C = C_1 \times C_2$, since 
\[
\begin{aligned}
\inf\{\lambda \geq 0| (x,  y) \in \lambda C\} &= \inf\{\lambda |  x \in \lambda C_1 \text{ and }y \in \lambda C_2\} \\
&= \max \{\gamma_1(x), \gamma_2(y)\}.
\end{aligned}
\]
By~\cite[Corollary 15.1.2]{Roc:70}, the polar of the gauge of $C$ is the support function of $C$, which is given by 
\[
\begin{aligned}
\sup_{x \in C_1, y \in C_2} \left\langle(x, y), (z_1,z_2) \right \rangle &= \sup_{x \in C_1} \left\langle x, z_1 \right\rangle +  \sup_{y \in C_2} \left\langle y, z_2 \right\rangle \\
&= \gamma_1^\circ(z_1) + \gamma_2^\circ(z_2).
\end{aligned}
\]
\end{proof}

This theorem allows us to easily compute the polars for $\phiSum$ and $\phiMax$
in terms of the polars of $\matrixNorm{\cdot}_*$ and $\|\cdot\|_1$,  which are
the dual norms, the spectral norm and infinity norm, respectively. 

\begin{corollary}[Explicit variational formulae for~\eqref{eq:sum-SPCP} and~\eqref{eq:max-SPCP}]
\label{cor:explicit-polar}
We have 
\begin{equation}
\label{eq:polarFormulas}
\begin{aligned}
\phiSum^\circ(Z_1, Z_2) &= \max\left\{\matrixNorm{Z_1}_2, \frac{1}{\lambdaSum} \|Z_2\|_\infty\right\}\\
\phiMax^\circ(Z_1, Z_2) &= \matrixNorm{Z_1}_2 + \frac{1}{\lambdaMax} \|Z_2\|_\infty,
\end{aligned}
\end{equation}
where $\matrixNorm{X}_2$ denotes the spectral norm (largest eigenvalue of $X^TX$). 

\end{corollary}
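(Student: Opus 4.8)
The plan is to read both identities in~\eqref{eq:polarFormulas} off Theorem~\ref{thm:product-gauge}, once the polars of the two building-block norms are in hand. Two standard facts are needed. First, when a gauge is in fact a norm $\|\cdot\|$, its polar gauge is the dual norm; in particular the polar of the nuclear norm $\matrixNorm{\cdot}_*$ is the spectral norm $\matrixNorm{\cdot}_2$, and the polar of $\|\cdot\|_1$ is $\|\cdot\|_\infty$ (these are exactly the dual-norm pairings; see~\cite[\S15]{Roc:70}). Second, positive scalars invert under the polar: if $\gamma = \lambda\|\cdot\|$ with $\lambda > 0$, then $\gamma$ is the gauge of $\lambda^{-1}\mathbb{B}_{\|\cdot\|}$, whose polar set is $\lambda\,\mathbb{B}_{\|\cdot\|_{\mathrm{dual}}}$, so that $\gamma^\circ = \lambda^{-1}\|\cdot\|_{\mathrm{dual}}$. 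Applied to $\lambdaSum\|\cdot\|_1$ and $\lambdaMax\|\cdot\|_1$ this gives polars $\lambdaSum^{-1}\|\cdot\|_\infty$ and $\lambdaMax^{-1}\|\cdot\|_\infty$, respectively.

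The $\phiMax$ formula is then immediate. Writing $\phiMax(L,S) = \max\{\gamma_1(L),\gamma_2(S)\}$ with $\gamma_1 = \matrixNorm{\cdot}_*$ and $\gamma_2 = \lambdaMax\|\cdot\|_1$, Theorem~\ref{thm:product-gauge} yields $\phiMax^\circ(Z_1,Z_2) = \gamma_1^\circ(Z_1) + \gamma_2^\circ(Z_2) = \matrixNorm{Z_1}_2 + \lambdaMax^{-1}\|Z_2\|_\infty$, which is the second line of~\eqref{eq:polarFormulas}.

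For $\phiSum$, which is a sum rather than a max, I would apply Theorem~\ref{thm:product-gauge} to the \emph{polar} gauges. Put $h(Z_1,Z_2) = \max\{\gamma_1^\circ(Z_1),\gamma_2^\circ(Z_2)\}$; since $\gamma_1^\circ$ and $\gamma_2^\circ$ are themselves gauges, the theorem applies and gives $h^\circ(L,S) = \gamma_1^{\circ\circ}(L) + \gamma_2^{\circ\circ}(S)$. Because $\gamma_1$ and $\gamma_2$ are norms (closed gauges with bounded, symmetric, full-dimensional unit balls), the bipolar theorem gives $\gamma_i^{\circ\circ} = \gamma_i$, hence $h^\circ = \phiSum$. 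Taking polars once more and using that $h$ is itself a closed gauge, so $h^{\circ\circ} = h$, we obtain $\phiSum^\circ = h$, i.e.\ $\phiSum^\circ(Z_1,Z_2) = \max\{\matrixNorm{Z_1}_2, \lambdaSum^{-1}\|Z_2\|_\infty\}$, the first line of~\eqref{eq:polarFormulas}. Alternatively one can argue directly, mimicking the set-level computation in the proof of Theorem~\ref{thm:product-gauge}: $\phiSum$ is the gauge of $C = \{(L,S) : \gamma_1(L) + \gamma_2(S) \le 1\}$, and maximizing $\langle Z_1, L\rangle + \langle Z_2, S\rangle$ over $C$ --- first at a fixed split $\gamma_1(L) = t$, $\gamma_2(S) \le 1-t$, then over $t \in [0,1]$ --- produces $\max\{\gamma_1^\circ(Z_1),\gamma_2^\circ(Z_2)\}$ for the support function of $C$, which by~\cite[Corollary~15.1.2]{Roc:70} equals $\phiSum^\circ$.

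The calculation is routine; the one place that needs care is the $\phiSum$ case, where I must verify that Theorem~\ref{thm:product-gauge} is legitimately applied to the polar gauges (it is --- the polar of a gauge is a gauge) and that the component norms are self-bipolar, $\gamma_i^{\circ\circ} = \gamma_i$, which holds precisely because $\matrixNorm{\cdot}_*$, $\|\cdot\|_1$, and their positive multiples are genuine norms. The reciprocal scaling constants $\lambdaSum^{-1}$, $\lambdaMax^{-1}$ must also be carried correctly through the polar, but that is just the scaling fact noted above.
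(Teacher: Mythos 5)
Your proposal is correct and follows exactly the route the paper intends: the corollary is presented as an immediate consequence of Theorem~\ref{thm:product-gauge} together with the standard facts that the polars of $\matrixNorm{\cdot}_*$ and $\|\cdot\|_1$ are the spectral and $\ell_\infty$ norms, with the scaling constants inverting under the polar. The only detail the paper leaves implicit --- that the sum-form $\phiSum$ requires applying the theorem to the polar gauges and invoking bipolarity (or, equivalently, the direct support-function computation you sketch) --- is handled correctly in your write-up.
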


This result was also obtained by~\cite[Section 9]{BergFriedlander:2011}, but is stated only for norms. 
Theorem~\ref{thm:product-gauge} applies to gauges, and in particular now allows 
asymmetric gauges, so non-negativity constraints can be easily modeled.

We now have closed form solutions for $v'(\tau)$ in~\eqref{eq:vprime} for both
formulations of interest.  The remaining challenge is to design a fast solver
for~\eqref{eq:value} for formulations~\eqref{eq:sum-SPCP}
and~\eqref{eq:max-SPCP}.  We focus on this challenge in the remaining sections
of the paper.  We also discuss the advantage of~\eqref{eq:max-SPCP} from this
computational perspective.

\section{PROJECTIONS}
\label{sec:projections}
In this section, we consider the computational issues of projecting onto the
set defined by $\phi(L,S) \le \tau$. For $\phiMax(L, S) =
\max(\matrixNorm{L}_*, \lambdaMax \|S\|_1)$ this is straightforward since the
set is just the product set of the nuclear norm and $\ell_1$ norm balls, and
efficient projectors onto these are known. In particular, projecting an $m
\times n$ matrix (without loss of generality let $m \le n$) onto the nuclear
norm ball takes $\order( m^2n )$ operations, and projecting it onto the
$\ell_1$-ball can be done on $\order( mn )$ operations using fast
median-finding algorithms~\citep{knapsack1984,Duchi2008}.

For $\phiSum(L, S) = \matrixNorm{L}_* + \lambdaSum \|S\|_1$, the projection is
no longer straightforward. Nonetheless, the following lemma shows this
projection can be efficiently implemented.  
\begin{proposition}{\cite[Section 5.2]{BergFriedlander:2011}}
Projection onto
    the scaled $\ell_1$-ball, that is, $\{ x \in \R^d \mid \sum_{i=1}^d
\alpha_i |x_i| \le 1 \}$ for some $\alpha_i > 0$, can be done in $\order( d
\log(d) ) $ time.  
\end{proposition}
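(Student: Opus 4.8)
The plan is to reduce the scaled-$\ell_1$ projection to a one-dimensional monotone root-finding problem that can be solved by sorting. Write the projection of $y \in \R^d$ as $x^\star = \argmin_x\{\tfrac12\norm{x-y}_2^2 : \sum_{i=1}^d \alpha_i|x_i| \le 1\}$. If $y$ already lies in the ball there is nothing to do, so assume $\sum_i \alpha_i|y_i| > 1$, in which case the constraint is active at the optimum. First I would exploit the separable structure of both the objective and the constraint to argue that $\mathrm{sign}(x_i^\star)=\mathrm{sign}(y_i)$, with $x_i^\star=0$ whenever $y_i=0$; replacing each $y_i$ by $|y_i|$ and restoring signs at the end, it suffices to treat $y\ge 0$ and optimize over $x\ge 0$.

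Next, introduce a Lagrange multiplier $\mu\ge 0$ for the (now equality) constraint $\sum_i\alpha_i x_i = 1$. The Lagrangian $\tfrac12\norm{x-y}_2^2 + \mu\bigl(\sum_i\alpha_i x_i - 1\bigr)$ separates across coordinates, and minimizing over $x_i\ge 0$ yields the soft-threshold form $x_i(\mu)=\max\{y_i-\mu\alpha_i,\,0\}$. It then remains to choose $\mu$ so that primal feasibility/complementary slackness holds, i.e. to solve $h(\mu):=\sum_i \alpha_i\max\{y_i-\mu\alpha_i,\,0\}=1$. The function $h$ is continuous, nonincreasing, and piecewise linear with breakpoints only at the values $\mu_i:=y_i/\alpha_i$; since $h(0)=\sum_i\alpha_i y_i>1$ and $h(\mu)\to 0$ as $\mu\to\infty$, there is a unique root $\mu^\star>0$, and $x(\mu^\star)$ is the desired projection — which I would confirm by checking that it satisfies the (sufficient) KKT conditions of the convex program.

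Finally, the algorithm: sort the $d$ breakpoints $\{\mu_i\}$ in $\order(d\log d)$ time; sweep through them in increasing order, maintaining incrementally the coefficients of the affine piece of $h$ active on each interval (each update is $\order(1)$, so the sweep costs $\order(d)$), until a sign change of $h(\cdot)-1$ locates the interval containing $\mu^\star$; on that interval $h$ is affine, so $\mu^\star$ is obtained from a single linear equation, and $x(\mu^\star)$ is then computed in $\order(d)$. The dominant cost is the sort, giving the claimed $\order(d\log d)$ bound. (A median-of-medians selection could replace the sort for an $\order(d)$ variant, but that refinement is not needed here.)

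The main obstacle is the structural reduction in the first two steps — pinning down the sign/support pattern of the minimizer and verifying that the KKT system has exactly the stated soft-threshold form with a unique multiplier — together with the bookkeeping for degenerate cases such as coincident breakpoints $\mu_i=\mu_j$ or a root $\mu^\star$ that lands exactly on a breakpoint. Once these are handled, the sorting-and-sweeping argument is routine, mirroring the unweighted $\ell_1$-ball projection of \citet{Duchi2008} with the weights $\alpha_i$ carried through.
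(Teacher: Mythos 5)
Your proposal is correct and follows essentially the same route the paper sketches: the minimizer is a soft-threshold parameterized by a single scalar multiplier, which is located by sorting the ratios $|y_i|/\alpha_i$ and sweeping with running sums, giving the $\order(d\log d)$ bound. Your write-up simply fills in the KKT and sign-reduction details that the paper leaves implicit.
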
 
The proof of the proposition follows by noting that
the solution can be written in a form depending only on a single scalar
parameter, and this scalar can be found by sorting $(|x_i|/\alpha_i)$ followed
by appropriate summations.
We conjecture that fast median-finding
ideas could reduce this to $\order(d)$ in theory, the same as the optimal
complexity for the $\ell_1$-ball. 

Armed with the above proposition, we state an important lemma below. For our
purposes, we may think of $S$ as a vector in $\R^{mn}$ rather than a matrix in
$\R^{m\times n}$.


\begin{lemma}{\cite[Section 9.2]{BergFriedlander:2011}}\label{lemma:jointProjection}
Let $L=U\Sigma V^T$ and $\Sigma = \diag(\sigma)$, and let $(S_i)_{i=1}^{mn}$ be
any ordering of the elements of $S$. Then the projection of $(L,S)$ onto the
$\phiSum$ ball is $(U\diag(\hat{\sigma})V^T,\hat{S})$, where
$(\hat{\sigma},\hat{S})$ is the projection onto the scaled $\ell_1$-ball $\{
    (\sigma,S) \mid\, \sum_{j=1}^{\min(m,n)} |\sigma_j| + \sum_{i=1}^{mn}
\lambdaSum |S_i| \le 1 \}$.
\end{lemma}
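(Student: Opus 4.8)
The plan is to prove Lemma~\ref{lemma:jointProjection} by reducing the matrix projection to the vector projection of the preceding Proposition, exploiting unitary invariance of the Frobenius distance and of the nuclear norm. Write $(L,S)$ for the point to be projected, as in the statement, and $L = U\Sigma V^T$, $\Sigma = \diag(\sigma)$, for its SVD. The projection is the unique solution of
\[
\min_{L',S'}\ \thalf\|L'-L\|_F^2 + \thalf\|S'-S\|_F^2 \quad\text{s.t.}\quad \matrixNorm{L'}_* + \lambdaSum\|S'\|_1 \le 1,
\]
where $S,S'$ are treated as vectors in $\R^{mn}$, as the statement permits, and the radius is normalized to $1$ without loss of generality (general radius $\tau$ follows by rescaling). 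The objective and the constraint each split across the $L'$-block and the $S'$-block except for the single scalar budget $\matrixNorm{L'}_* + \lambdaSum\|S'\|_1$ that couples them, so the heart of the argument is to pin down the singular \emph{vectors} of an optimal $L'$.

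For that key step, I would take any feasible $(L',S')$, let $\sigma(L')$ be the nonincreasingly ordered, zero-padded vector of singular values of $L'$, and set $L'' := U\,\diag(\sigma(L'))\,V^T$. Then $\matrixNorm{L''}_* = \|\sigma(L')\|_1 = \matrixNorm{L'}_*$, so $(L'',S')$ is still feasible; while von Neumann's trace inequality $\iprod{L'}{L} \le \iprod{\sigma(L')}{\sigma}$ combined with $\|L'-L\|_F^2 = \|\sigma(L')\|_2^2 - 2\iprod{L'}{L} + \|\sigma\|_2^2$ gives
\[
\|L'-L\|_F^2 \ \ge\ \|\sigma(L')-\sigma\|_2^2 \ =\ \|L''-L\|_F^2 ,
\]
so passing from $(L',S')$ to $(L'',S')$ never increases the objective. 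Hence we may restrict attention to $L'$ of the form $U\diag(\sigma')V^T$ with $\sigma'\ge 0$, and the problem collapses to
\[
\min_{\sigma'\ge 0,\ S'}\ \thalf\|\sigma'-\sigma\|_2^2 + \thalf\|S'-S\|_2^2 \quad\text{s.t.}\quad \sum_{j}\sigma'_j + \lambdaSum\sum_i |S'_i| \le 1 .
\]

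Dropping the now-redundant constraint $\sigma'\ge 0$, this last problem is precisely the Euclidean projection of the stacked vector $(\sigma,S)$ onto the scaled $\ell_1$-ball $\{(\sigma',S') : \sum_j|\sigma'_j| + \lambdaSum\sum_i|S'_i|\le 1\}$, which is exactly the object computed by the preceding Proposition. To justify dropping $\sigma'\ge0$, I would note that the $\ell_1$-ball projection acts coordinatewise as a sign-preserving soft-threshold, so $\hat\sigma_j = \max(\sigma_j - \mu,0)\ge 0$ because $\sigma\ge 0$; thus the constraint is inactive and the reduced problems coincide. Assembling the pieces, the projection of $(L,S)$ onto the $\phiSum$ ball is $(U\diag(\hat\sigma)V^T,\hat S)$, with $(\hat\sigma,\hat S)$ the scaled-$\ell_1$ projection of $(\sigma,S)$.

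I expect the main obstacle to be the equality-case bookkeeping around von Neumann's inequality: one must check that the singular values of $L$ and of the candidate $L'$ can be placed in a common nonincreasing order realized by the single pair $(U,V)$ — handling ties and the rectangular case $m\ne n$ — and that the vector $\hat\sigma$ returned by the $\ell_1$-ball projection is itself nonincreasing and nonnegative, so that $U\diag(\hat\sigma)V^T$ is a legitimate SVD. A minor additional point is that optimizing the split of the unit budget between the $L'$- and $S'$-blocks in the matrix problem agrees with doing so in the vector problem; this is automatic once both are recognized as the same $\ell_1$-ball projection.
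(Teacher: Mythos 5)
Your proposal is correct and follows essentially the same route as the paper's sketch: both reduce the matrix problem to the observation that the optimal $L'$ inherits the singular vectors $(U,V)$ of $L$, after which the problem collapses to the scaled $\ell_1$-ball projection of the stacked vector $(\sigma,S)$. The only difference is one of detail: the paper delegates the singular-vector alignment to the ``well-known'' soft-thresholding characterization of nuclear-norm-ball projection (via a nested minimization over $S'$ then $L'$), whereas you prove that fact directly from von Neumann's trace inequality and also verify that the relaxation $\sigma'\ge 0$ is tight — a welcome filling-in of the sketch rather than a new approach.
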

\begin{proof}[Sketch of proof]
We need to solve
\begin{equation*}
  \min_{\{ (L',S') \mid \, \phiSum(L',S') \le 1 \} }    
       \,\frac{1}{2}\norm{L'-L}_F^2 + \frac{1}{2}\norm{S'-S}_F^2.
\end{equation*}
Alternatively, solve
\[
  \min_{S'}\, \min_{\{L' \mid\, \matrixNorm{L'}_* \le 1 - \lambdaSum\|S'\|_1 \}}  \,\frac{1}{2}\norm{L'-L}_F^2 + \frac{1}{2}\norm{S'-S}_F^2.
\]
The inner minimization is equivalent to projecting onto the nuclear norm ball,
and this is well-known to be soft-thresholding of the singular values. Since it
depends only on the singular values, recombining the two minimization terms
gives exactly a joint projection onto a scaled $\ell_1$-ball.
\end{proof}

\begin{remark}
All the references to the $\ell_1$-ball can be replaced by the intersection of
the $\ell_1$-ball and the non-negative cone, and the projection is still
efficient.  As noted in Section~\ref{sec:variational}, imposing non-negativity
constraints is covered by the gauge results of Theorem~\ref{thm:product-gauge}
and Corollary~\ref{cor:explicit-polar}.  Therefore, both the variational and
efficient computational framework can be applied to this interesting case. 
\end{remark}

\section{SOLVING THE SUB-PROBLEM VIA PROJECTED QUASI-NEWTON METHODS}
\label{sec:QN}
In order to accelerate the approach, we can use quasi-Newton (QN) methods since
the objective has a simple structure.\footnote{ We use ``quasi-Newton'' to mean
an approximation to a Newton method and it should not be confused with methods
like BFGS} The main challenge here is that for the $\matrixNorm{L}_*$ term,
it is tricky to deal with a weighted quadratic term (whereas for $\|S\|_1$, we
can obtain a low-rank Hessian and solve it efficiently via coordinate descent).

We wish to solve \eqref{eq:max-SPCP-flip}.  Let $X=(L,S)$ be the full variable,
so we can write the objective function as $f(X)=\frac{1}{2}\|\AAA(X)-Y\|_F^2$.
To simplify the exposition, we take  $\AAA=(I, I)$ to be the $mn \times 2mn$
matrix, but the presented approach applies to general linear operators
(including terms like $\proj_\Omega$).  The matrix structure of $L$ and $S$ is
not important here, so we can think of them as $mn \times 1$ vectors instead of
$m \times n$ matrices.

The gradient is $\nabla f(X) = \AAA^T( \AAA(X)-Y )$. For convenience, we use
$r(X) = \AAA(X)-Y$ and
\[
\nabla f(X) = \pmat{ \nabla_L f(X) \\ \nabla_S f(X) } = 
\AAA^T\pmat{r(X)\\r(X)}, \quad r_k \equiv r(X_k). 
\]

The Hessian is  $\AAA^T\AAA = \pmat{I&I\\I&I}$.  We cannot simultaneously
project $(L,S)$ onto their constraints with this Hessian scaling (doing so
would solve the original problem!), since the Hessian removes separability.
Instead, we use $(L_k,S_k)$ to approximate the cross-terms.

\newcommand{\redd}[1]{\mathbf{#1}}
The true function is a quadratic, so the following quadratic expansion around $X_k=(L_k,S_k)$ is exact:
\[
\begin{aligned}
f(L,S) = f(X_k) &+ \iprod{\pmat{\nabla_L f(X_k)\\\nabla_S f(X_k)}}{\pmat{L-L_k\\S-S_k}} \\
&+ \iprod{\pmat{L-L_k\\S-S_k}}{\nabla^2 f\pmat{L-L_k\\S-S_k}} \\
    = f(X_k) &+ \iprod{\pmat{r_k\\r_k}}{\pmat{L-L_k\\S-S_k}} \\
    &+ \iprod{\pmat{L-L_k\\S-S_k}}{\pmat{1&1\\1&1}\pmat{L-L_k\\S-S_k}} \\
    = f(X_k)& + \iprod{\pmat{r_k\\r_k}}{\pmat{L-L_k\\S-S_k}} \\
    &+ \iprod{\pmat{\redd{L}-L_k\\\redd{S}-S_k}}{\pmat{L-L_k+\redd{S}-S_k\\\redd{L}-L_k+S-S_k}}
\end{aligned}
\]
The coupling of the second order terms, shown in bold, prevents direct 1-step
minimization of $f$, subject to the nuclear and 1-norm constraints.  The
FISTA~\citep{BecTeb09} and spectral gradient methods (SPG)~\citep{Wright2009}
replace the Hessian $\pmat{I&I\\I&I}$ with the upper bound $2\pmat{I&0\\0&I}$,
which solves the coupling issue, but potentially lose too much second order
information.  After comparing FISTA and SPG, we use the SPG method for solving
\eqref{eq:sum-SPCP-flip}.  However, for \eqref{eq:max-SPCP-flip} (and for
\eqref{eq:lag-SPCP}, which has no constraints but rather non-smooth terms,
which can be treated like constraints using proximity operators), the
constraints are uncoupled and we can take a ``middle road'' approach, replacing
\[
\iprod{\pmat{\redd{L}-L_k\\\redd{S}-S_k}}{\pmat{L-L_k+\redd{S}-S_k\\\redd{L}-L_k+S-S_k}}
\]
with
\[
\iprod{\pmat{L-L_k\\S-S_k}}{\pmat{L-L_k+\redd{S_k-S_{k-1}}\\\redd{L_{k+1}-L_{k}}+S-S_k}}.
\]
The first term is decoupled, allowing us to update $L_k$, and then this is
plugged into the second term in a Gauss-Seidel fashion. In practice, we also
scale this second-order term with a number slightly greater than $1$ but less
than $2$ (e.g.,~$1.25$) which leads to more robust behavior. We expect this
``quasi-Newton'' trick to do well when $S_{k+1}-S_k$ is similar to $S_k -
S_{k-1}$.

\section{NUMERICAL RESULTS}
\label{sec:numerics}
The numerical experiments are done with the algorithms suggested in this paper
as well as code from PSPG~\citep{AybatRPCA}, NSA~\citep{Aybat2013}, and
ASALM~\citep{ASALM}\footnote{PSPG, NSA and ASALM available from the experiment
    package at \url{http://www2.ie.psu.edu/aybat/codes.html}}.  
We modified the other software as needed for testing purposes. PSPG, NSA and
ASALM all solve \eqref{eq:sum-SPCP}, but ASALM has another variant which solves
\eqref{eq:lag-SPCP} so we test this as well. All three programs also use
versions of PROPACK from \cite{PROPACK,BeckerPROPACK} to compute partial SVDs.
Since the cost of a single iteration may vary among the solvers, we measure
error as a function of time, not iterations.  When a reference solution
$(L^\star,S^\star)$ is available, we measure the (relative) error of a trial
solution $(L,S)$ as $\|L-L^\star\|_F/\|L^\star\|_F +
\|S-S^\star\|_F/\|S^\star\|_F$.  The benchmark is designed so the time required
to calculate this error at each iteration does not factor into the reported
times.  Since picking stopping conditions is solver dependent, we show plots of
error vs time, rather than list tables.  All tests are done in Matlab and the
dominant computational time was due to matrix multiplications for all
algorithms; all code was run in the same quad-core $1.6$~GHz i7 computer.

For our implementations of the \eqref{eq:max-SPCP-flip},
\eqref{eq:sum-SPCP-flip} and \eqref{eq:lag-SPCP}, we use a randomized
SVD~\citep{halko2011finding}.  Since the number of singular values needed is not
known in advance, the partial SVD may be called several times (the same is true
for PSPG, NSA and ASALM).  Our code limits the number of singular values on the
first two iterations in order to speed up calculation without affecting
convergence.  Unfortunately, the delicate projection involved in
\eqref{eq:sum-SPCP-flip} makes incorporating a partial SVD to this setting more
challenging, so we use Matlab's dense \texttt{SVD} routine.

\subsection{Synthetic test with exponential noise}
We first provide a test with generated data. The observations $Y\in\R^{m\times
n}$ with $m=400$ and $n=500$ were created by first sampling a rank $20$ matrix
$Y_0$ with random singular vectors (i.e.,~from the Haar measure) and singular
values drawn from a uniform distribution with mean $0.1$, and then adding
exponential random noise (with mean equal to one tenth the median absolute
value of the entries of $Y_0$).  This exponential noise, which has a longer
tail than Gaussian noise, is expected to be captured partly by the $S$ term and
partly by the $\|L+S-Y\|_F$ term.

Given $Y$, the reference solution $(L^\star,S^\star)$ was generated by solving
\eqref{eq:lag-SPCP} to very high accuracy; the values $\lambdaL=0.25$ and
$\lambdaS=10^{-2}$ were picked by hand tuning $(\lambdaL,\lambdaS)$ to find a
value such that both $L^\star$ and $S^\star$ are non-zero.  The advantage to
solving \eqref{eq:lag-SPCP} is that knowledge of
$(L^\star,S^\star,\lambdaL,\lambdaS)$ allows us to generate the parameters for
all the other variants, and hence we can test different problem formulations.

With these parameters, $L^\star$ was rank 17 with nuclear norm $6.754$,
$S^\star$ had 54 non-zero entries (most of them positive) with $\ell_1$ norm
$0.045$, the normalized residual was $\|L^\star+S^\star-Y\|_F/\|Y\|_F=0.385$,
and $\eps=1.1086$, $\lambdaSum=0.04$, $\lambdaMax=150.0593$, $\tauSum=6.7558$
and $\tauMax=6.7540$.

\begin{figure}[ht]
\includegraphics[width=\columnwidth]{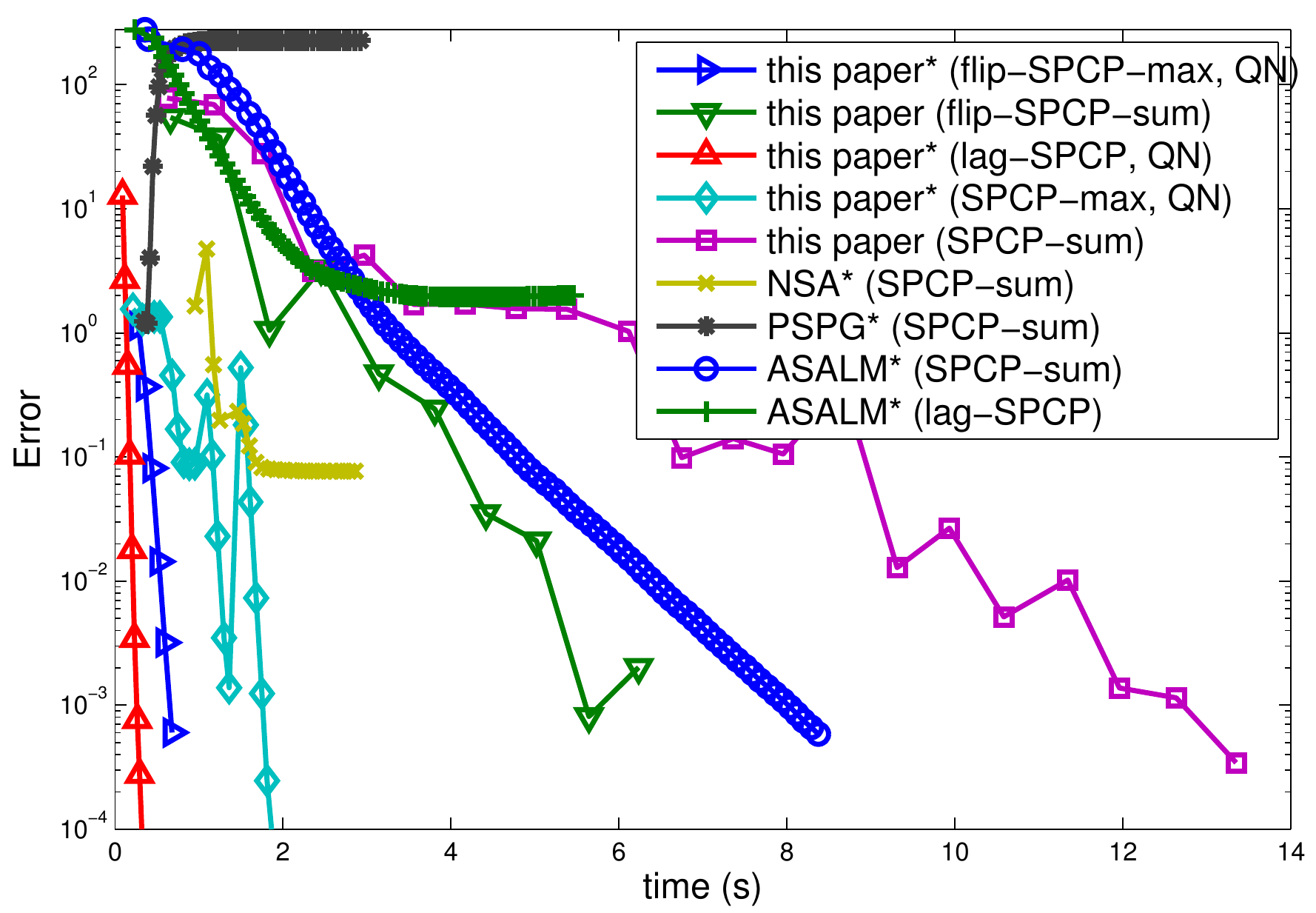}
\caption{The exponential noise test. The asterisk in the legend means the method uses a fast SVD.}
\label{fig:1}
\end{figure}

Results are shown in Fig.~\ref{fig:1}. Our methods for \eqref{eq:max-SPCP-flip}
and \eqref{eq:lag-SPCP} are extremely fast, because the simple nature of these
formulations allows the quasi-Newton acceleration scheme of
Section~\ref{sec:QN}.  In turn, since our method for
solving~\eqref{eq:max-SPCP} uses the variational framework of
Section~\ref{sec:variational} to solve a sequence of~\eqref{eq:max-SPCP-flip}
problems, it is also competitive (shown in cyan in Figure~\ref{fig:1}).  The
jumps are due to re-starting the sub-problem solver with a new value of $\tau$,
generated according to~\eqref{eq:newton}. 

Our proximal gradient method for \eqref{eq:sum-SPCP-flip}, which makes use of
the projection in Lemma~\ref{lemma:jointProjection}, converges more slowly,
since it is not easy to accelerate with the quasi-Newton scheme due to variable
coupling, and it does not make use of fast SVDs. Our solver
for~\eqref{eq:sum-SPCP}, which depends on a sequence of
problems~\eqref{eq:sum-SPCP-flip},  converges slowly. 

The ASALM performs reasonably well, which was unexpected since it was shown to
be worse than NSA and PSPG in \cite{AybatRPCA,Aybat2013}.  The PSPG solver
converges to the wrong answer, most likely due to a bad choice of the smoothing
parameter $\mu$; we tried choosing several different values other than the
default but did not see improvement for this test (for other tests, not shown,
tweaking $\mu$ helped significantly).  The NSA solver reaches moderate error
quickly but stalls before finding a highly accurate solution.

\subsection{Synthetic test from \cite{Aybat2013}}
We show some tests from the test setup of \cite{Aybat2013} in the $m=n=1500$
case. The default setting of $\lambdaSum=1/\sqrt{\max(m,n)}$ was used, and then
the NSA solver was run to high accuracy to obtain a reference solution
$(L^\star,S^\star)$.  From the knowledge of $(L^\star,S^\star,\lambdaSum)$, one
can generate $\lambdaMax,\tauSum,\tauMax,\eps$, but not $\lambdaS$ and
$\lambdaL$, and hence we did not test the solvers for \eqref{eq:lag-SPCP} in
this experiment.  The data was generated as $Y=L_0+S_0+Z_0$, where $L_0$ was
sampled by multiplication of $m \times r$ and $r \times n$ normal Gaussian
matrices, $S_0$ had $p$ randomly chosen entries uniformly distributed within
$[-100,100]$, and $Z_0$ was white noise chosen to give a SNR of $45$~dB.  We
show three tests that vary the rank from $\{0.05,0.1\}\cdot \min(m,n)$ and the
sparsity ranging from $p=\{0.05,0.1\}\cdot mn$.  Unlike \cite{Aybat2013}, who
report error in terms of a true noiseless signal $(L_0,S_0)$, we report the
optimization error relative to $(L^\star,S^\star)$.

For the first test (with $r=75$ and $p=0.05\times mn$), $L^\star$ had rank $786$
and nuclear norm $111363.9$; $S^\star$ had $75.49\%$ of its elements nonzero
and $\ell_1$ norm $ 5720399.4$, and
$\|L^\star+S^\star-Y^\star\|_F/\|Y\|_F=1.5\cdot 10^{-4}$.  The other parameters
were $\eps=3.5068$, $\lambdaSum=0.0258$, $\lambdaMax=0.0195$,
$\tauSum=2.5906\cdot 10^{5}$ and $\tauMax=1.1136\cdot 10^{5}$.  An interesting
feature of this test is that while $L_0$ is low-rank, $L^\star$ is nearly
low-rank but with a small tail of significant singular values until number
$786$.  We expect methods to converge quickly to low-accuracy where only a
low-rank approximation is needed, and then slow down as they try to find a
larger rank highly-accurate solution. 

\begin{figure}[ht]
\includegraphics[width=\columnwidth]{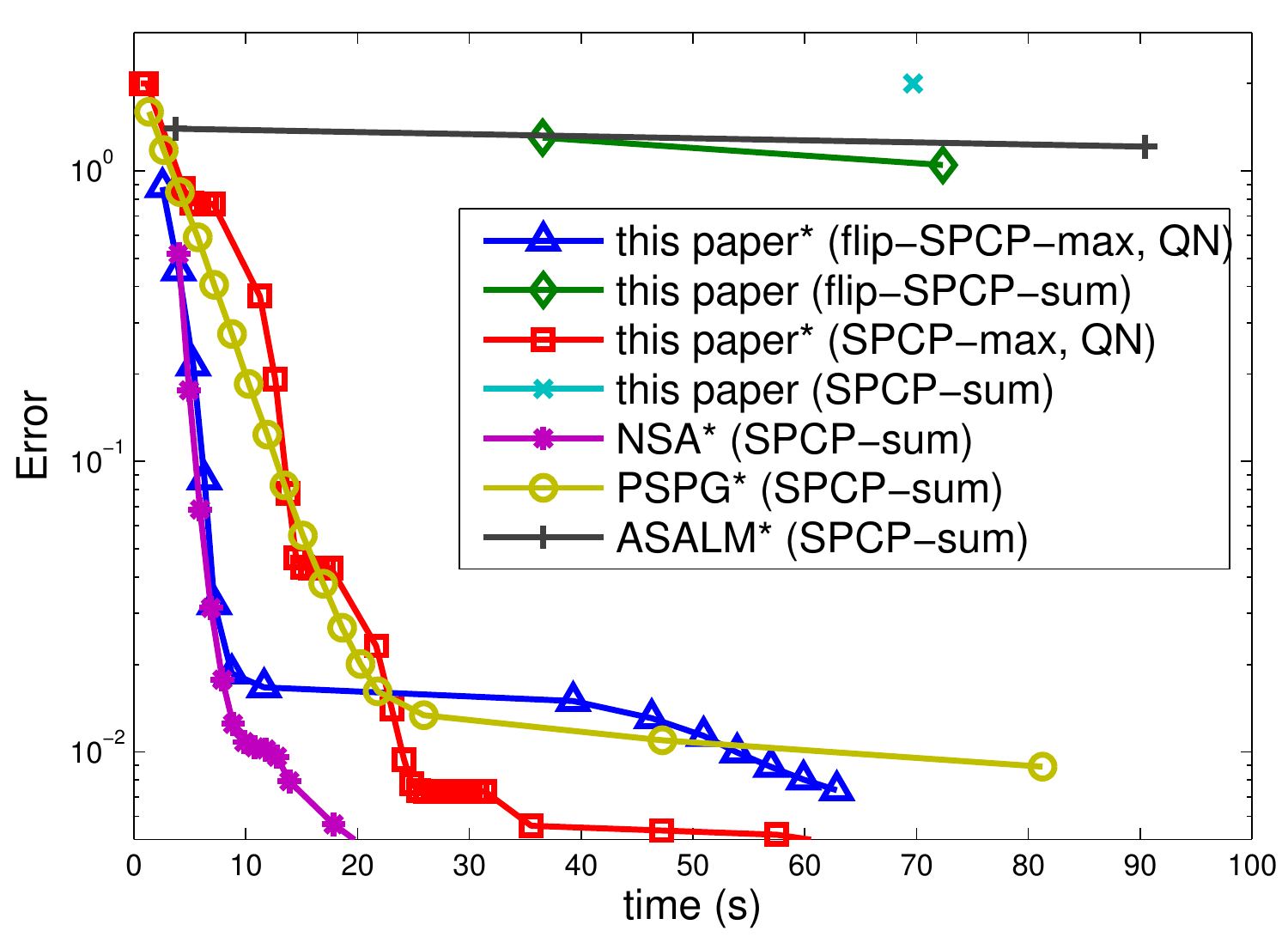}
\caption{The $1500\times 1500$ synthetic noise test.}
\label{fig:2}
\end{figure}

The results are shown in Fig.~\ref{fig:2}. Errors barely dip below $0.01$ (for
comparison, an error of $2$ is achieved by setting $L=S=0$).  The NSA and PSPG
solvers do quite well. In contrast to the previous test, ASALM does poorly.
Our methods for \eqref{eq:sum-SPCP-flip}, and hence \eqref{eq:sum-SPCP}, are
not competitive, since they use dense SVDs.  We imposed a time-limit of about
one minute, so these methods only manage a single iteration or two.  Our
quasi-Newton method for \eqref{eq:max-SPCP-flip} does well initially, then
takes a long time due to a long partial SVD computation.  Interestingly,
\eqref{eq:max-SPCP} does better than pure \eqref{eq:max-SPCP-flip}.  One
possible explanation is that it chooses a fortuitous sequence of $\tau$ values,
for which the corresponding \eqref{eq:max-SPCP-flip} subproblems become
increasingly hard, and therefore benefit from the warm-start of the solution of
the easier previous problem.  This is consistent with empirical observations
regarding continuation techniques, see
e.g.,~\citep{vandenberg2008probing,Wright2009}.

\begin{figure}[ht]
\includegraphics[width=\columnwidth]{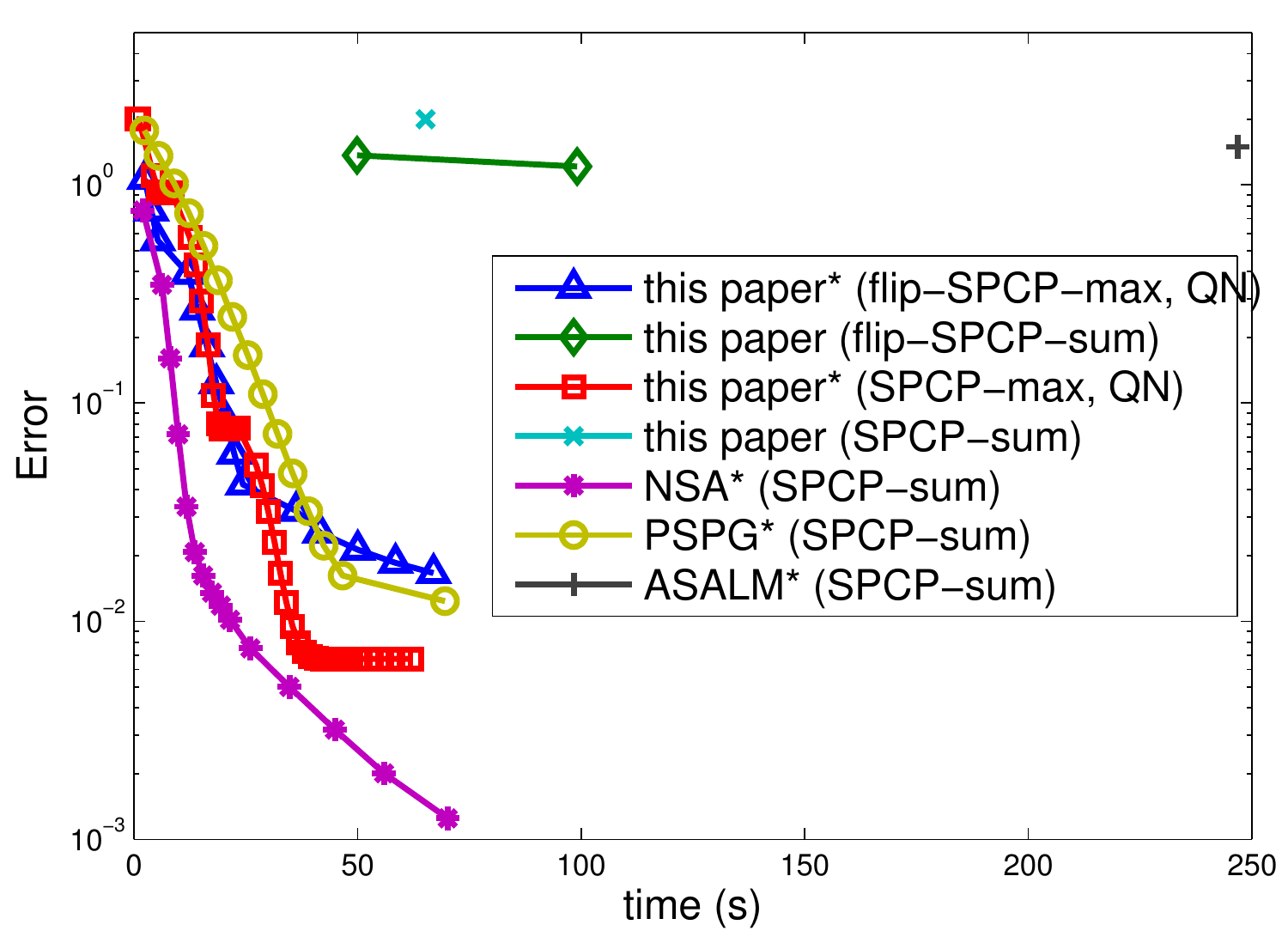}
\caption{Second $1500\times 1500$ synthetic noise test.}
\label{fig:3}
\end{figure}

Figure~\ref{fig:3} is the same test but with $r=150$ and $p=0.1\cdot m n$, and the conclusions are largely similar.

\subsection{Cloud removal}
\begin{figure*}[ht]
\centering
\includegraphics[width=1.8\columnwidth]{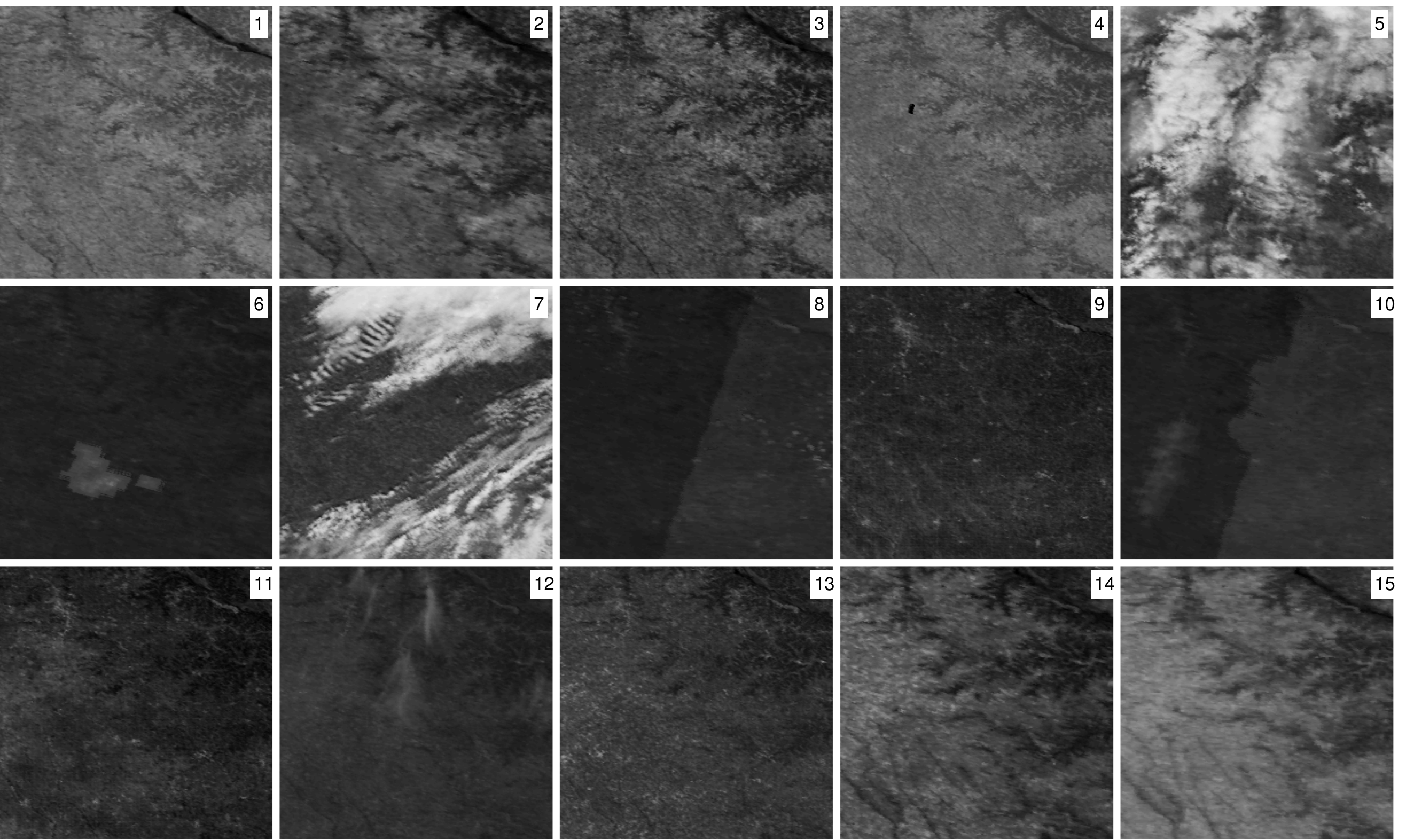}
\caption{Satellite photos of the same location on different days}
\label{fig:4}
\end{figure*}

\begin{figure}[t]
\includegraphics[width=\columnwidth]{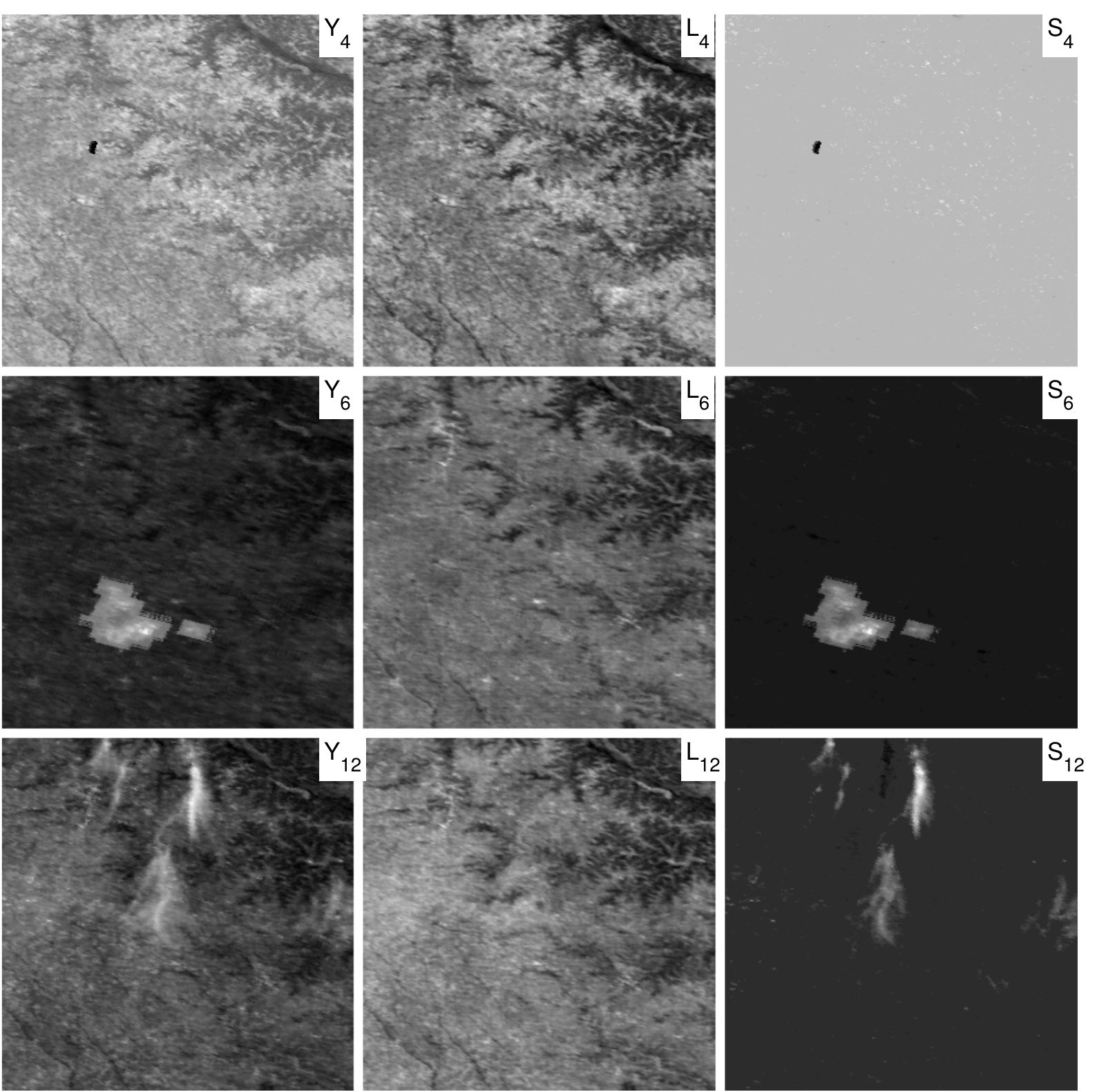}
\caption{Showing frames 4, 5 and 12. Leftmost column is original data, middle column is low-rank term of the solution, and right column is sparse term of the solution. Data have been processed slightly to enhance contrast for viewing.}
\label{fig:5}
\end{figure}

Figure~\ref{fig:4} shows 15 images of size $300 \times 300$ from the MODIS satellite,\footnote{Publicly available at \url{http://ladsweb.nascom.nasa.gov/}} 
after some transformations to turn images from different spectral bands into
one grayscale images.  Each image is a photo of the same rural location but at
different points in time over the course of a few months.  The background
changes slowly and the variability is due to changes in vegetation, snow cover,
and different reflectance.  There are also outlying sources of error, mainly
due to clouds (e.g., major clouds in frames 5 and 7, smaller clouds in frames
9, 11 and 12), as well as artifacts of the CCD camera on the satellite (frame 4
and 6) and issues stitching together photos of the same scene (the lines in
frames 8 and 10).

There are hundreds of applications for clean satellite imagery, so removing the
outlying error is of great practical importance.  Because of slow changing
background and sparse errors, we can model the problem using the robust PCA
approach.  We use the \eqref{eq:max-SPCP-flip} version due to its speed, and
pick parameters $(\lambdaMax,\tauMax)$ by using a Nelder-Mead simplex search.
For an error metric to use in the parameter tuning, we remove frame $1$ from
the data set (call it $y_1$) and set $Y$ to be frames 2--15.  From this
training data $Y$, the algorithm generates $L$ and $S$. Since $L$ is a $300^2
\times 14$ matrix, it has far from full column span.  Thus our error is the
distance of $y_1$ from the span of $L$, i.e., $\|y_1 -
\proj_{\text{span}(L)}(y_1)\|_2$.

Our method takes about 11 iterations and 5 seconds, and uses a dense SVD
instead of the randomized method due to the high aspect ratio of the
matrix.  Some results of the obtained $(L,S)$ outputs are in Fig.~\ref{fig:5},
where one can see that some of the anomalies in the original data frames $Y$
are picked up by the $S$ term and removed from the $L$ term.  Frame 4 has what
appears to be a camera pixel error;  frame 6 has another artificial error (that
is, caused by the camera and not the scene); and frame 12 has cloud cover.

\section{CONCLUSIONS}

In this paper, we reviewed several formulations and algorithms for 
the RPCA problem. We introduced a new denoising formulation~\eqref{eq:max-SPCP}
to the ones previously considered, and discussed modeling and algorithmic advantages
of denoising formulations~\eqref{eq:max-SPCP} and~\eqref{eq:sum-SPCP} 
compared to flipped versions~\eqref{eq:max-SPCP-flip} and~\eqref{eq:sum-SPCP-flip}.
In particular, we showed that these formulations can be linked using a variational framework,
which can be exploited to solve denoising formulations using a sequence of flipped problems. 
For~\eqref{eq:max-SPCP-flip}, we proposed a quasi-Newton acceleration that is competitive 
with state of the art, and used this innovation to design a fast method for~\eqref{eq:max-SPCP}
through the variational framework. The new methods were compared against prior art 
on synthetic examples, and applied to a real world cloud removal application application using publicly 
available MODIS satellite data. 

\label{sec:conclusions}

\small{
\pdfbookmark[1]{References}{refSection}
\bibliographystyle{icml2013}
\bibliography{references_RPCA}
}

\end{document}